\newtheorem{corollary}{Corollary}[section]
\newtheorem{definition}[corollary]{Definition}
\newtheorem{lemma}[corollary]{Lemma}
\newtheorem{prp}[corollary]{Proposition}
\newtheorem{thm}[corollary]{Theorem}
\newfont{\sBlackboard}{msbm10 scaled 900}
\newcommand{\mylabel}[1]{\label{#1}
            \ifx\undefined\stillediting
            \else \fbox{$#1$}\fi }
\newcommand{\BE}{\begin{equation}}
\newcommand{\EEQ}{\end{equation}}
\newcommand{\rfb}[1]{\mbox{\rm
   (\ref{#1})}\ifx\undefined\stillediting\else:\fbox{$#1$}\fi}
\newfont{\Blackboard}{msbm10 scaled 1200}
\newfont{\roma}{cmr10 scaled 1200}
\def\CC{\rm \hbox{C\kern-.56em\raise.4ex
         \hbox{$\scriptscriptstyle |$}\kern+0.5 em }}
\newcommand{\mm}    {{\hbox{\hskip 0.5pt}}}
\newcommand{\bluff} {{\hbox{\raise 15pt \hbox{\mm}}}}
\def\section{\@startsection {section}{1}{\z@}{-3.5ex plus -1ex minus
    -.2ex}{2.3ex plus .2ex}{\large\bf}}
\def\be{\begin{equation}}
\def\ee{\end{equation}}
\title{\bf Existence and nonexistence of solutions for p(x)-curl systems arising in electromagnetism}
\date{}
\begin{document}
\maketitle
\begin{center}
{\large \bf Anouar Bahrouni$\,^1$
 and
Du\v{s}an Repov\v{s}$\,^2$} 
\end{center}

\begin{center}
$^1\,$ Mathematics Department, University of
Monastir, Faculty of Sciences, \\
5019 Monastir, Tunisia, {\tt
bahrounianouar@yahoo.fr} 
\end{center}

\begin{center} 
$^2\,$
Faculty of Mathematics and Physics,
University of Ljubljana,  \\
1000
Ljubljana, Slovenia, {\tt
dusan.repovs@guest.arnes.si}
\end{center}

\noindent\textbf{Abstract.} {In this paper, we study the existence
and the nonexistence of solutions for a new class of $p(x)-$curl
systems arising in electromagnetism. This work generalizes some
results obtained in the $p-$curl case. There seems to be
no results on the nonexistence of solutions for curl systems with variable exponent.}\\
{\bf Keywords:} p(x)-curl systems, nonexistence of solutions, critical point theory, electromagnetism.\\
{\bf 2010 Mathematics Subject Classifications}: 35J60; 35J91; 58E30.
\section{Introduction}
The study of PDE's involving variable exponents has become very
attractive in recent decades since differential operators involving
variable exponent growth conditions can serve in describing
non-homogeneous phenomena which can occur in different branches of
science, e.g.: electrorheological fluids and
nonlinear Darcy's law in porous media, see \cite{a,aa}. \\
The literature on equations with $p(x)-$Laplacian or $p-$curl
operators is quite large, see e.g. \cite{Bartsh}-\cite{repovsaa} and
the references therein. To the best of our knowledge, the only
results involving the $p(x)-$curl operators can be found in
\cite{san,binlin}. In \cite{san}, the authors introduced a suitable
variable exponent Sobolev space and obtained the existence of local
or global weak solutions for equation with $p(x,t)-$curl operator by
using Galerkin's method. In \cite{binlin}, the authors used for the
first time the
variational methods for equations involving $p(x)-$curl operator.\\
 In this paper our aim is to study equations in which a variable
exponent curl operator is present. More precisely, we study the
existence and nonexistence of solutions. To the best of the authors
knowledge, this is one of the first works devoted to the studies of
the nonexistence of solutions
in variable exponent curl operator.\\
 Let $\Omega$ be a bounded simply connected domain of $\mathbb{R}^{3}$ with a $C^{1,1}$ boundary denoted by $\partial \Omega$.
 To introduce our problem more precisely, we first give some notations. Let
$u=(u_{1},u_{2},u_{3})$ be a vector function on $\Omega$. The
divergence of $u$ is denoted by
$$\nabla.u=\partial_{x_{1}}u_{1}+\partial_{x_{2}}u_{2}+\partial_{x_{3}}u_{3}$$
and the curl of $u$ is defined by
$$\nabla \times u=(\partial_{x_{2}}u_{3}-\partial_{x_{3}}u_{2},\partial_{x_{3}}u_{1}-\partial_{x_{1}}u_{3}, \partial_{x_{1}}u_{2}-\partial_{x_{2}}u_{1}).$$
We consider the following p(x)-curl systems:
\begin{equation}\label{eq}
\begin{cases}
\nabla\times (|\nabla\times u|^{p(x)-2}\nabla\times u)=\lambda
g(x,u)-\mu f(x,u), \ \ \ \nabla.u=0 \ \ \mbox {in} \ \ \Omega,\\
|\nabla\times u|^{p(x)-2}\nabla\times u\times \textbf{n}=0, \ \ \ \
\ \ \ \ \ \ \ \ \  \ \ \ \ \ \ \ \ \ \ \ \  \ \ \ \ \ u.\textbf{n}=0
\ \ \mbox{on} \ \
\partial \Omega,
\end{cases}
\end{equation}
where $\lambda,\mu>0$, $p\in C(\overline{\Omega})$ with
$\frac{6}{5}<p^{-}=\displaystyle \min_{x\in
\overline{\Omega}}p(x)\leq p^{+}=\displaystyle \max_{x\in
\overline{\Omega}}p(x)<3$ and there exists
$w:\mathbb{R}_{0}^{+}\rightarrow \mathbb{R}_{0}^{+}$ such that
$$\forall x,y \in \overline{\Omega}, \ \ |x-y|<1, \ \ |p(x)-p(y)|\leq w(|x-y|), \ \ \mbox{and} \ \ \displaystyle \lim_{s \rightarrow 0^{+}}w(s)log(\frac{1}{s})=C<\infty \ \ \  \ \ \ \ (P).$$
Throughout this paper we shall always make the following assumptions:\\
$(F_{1})$ $F: \Omega \times \mathbb{R}^{3} \rightarrow \mathbb{R}$
is differentiable with respect to $u\in \mathbb{R}^{3}$ and
$f=\partial_{u} F(x,u):\Omega \times \mathbb{R}^{3} \rightarrow
\mathbb{R}^{3}$ is a Carath\'eodory function.\\
$(F_{2})$ There exist $\alpha,\beta>0$  and $q\in
C(\overline{\Omega)}$ such that $
p^{+}<q(x)<p^{\ast}(x)=\frac{3p(x)}{3-p(x)}$ in $\overline{\Omega}$
and
$$\displaystyle F(x,u)\geq \alpha |u|^{q(x)} \ \ \mbox{and} \ \  |f(x,u)|\leq \beta(1+|u|^{q(x)-1}), \ \ \forall
(x,u)\in \overline{\Omega} \times \mathbb{R}^{3}.$$ $(G_{1})$ There
exist a nonnegative function $g\in L^{\infty}(\Omega)$ and $r\in
C(\overline{\Omega})$ such that
$$ p^{+}< r^{-}\leq r(x)<q^{-} \ \ and \ \ G(x,u)= g(x)|u|^{r(x)},$$
for all $(x,u)\in \overline{\Omega} \times \mathbb{R}^{3}.$\\
 $(G_{2})$ $G: \Omega \times \mathbb{R}^{3}
\rightarrow \mathbb{R}$ is differentiable with respect to $u\in
\mathbb{R}^{3}$ and $g=\partial_{u} G(x,u):\Omega \times
\mathbb{R}^{3} \rightarrow
\mathbb{R}^{3}$  is a Carath\'eodory function.\\
$(G_{3})$ There exist $\gamma,\mu>0$, $L>1$ and $k,r\in
C(\overline{\Omega})$ such that $1< k<p^{-}$ and\\
 $1<
r(x)<p^{\ast}(x)$,
$$|g(x,u)|\leq \mu(1+|u|^{r(x)-1}), \forall (x,u)\in\overline{\Omega}\times
\mathbb{R}^{3},
 \ \ \displaystyle \limsup_{u\rightarrow 0} \frac{G(x,u)}{|u|^{p^{+}}}=0 \ \ \mbox{uniformly in } \ \ x\in \Omega$$
and
 $$\displaystyle \sup_{u\in E}\displaystyle \int_{\Omega}G(x,u)dx>0, \ \ |G(x,u)|\leq \gamma |u|^{k(x)}, \forall x\in \mathbb{R}^{3}, \forall |u|>L.$$
 Our main results are the following two theorems.
\begin{thm}\label{thm1}
Assume that hypotheses $(F_{1})$-$(F_{2})$ and $(G_{1})-(G_{2})$
hold. Then:\\ $(i)$ There exist $\lambda_{1},\mu_{1}>0$ such that,
if $0<\lambda<\lambda_{1}$ and $\mu>\mu_{1}$, then problem
\eqref{eq} does not have any nontrivial weak solution.\\
$(ii)$ For each $\mu>0$, there exists $\lambda_{\mu}>0$ such that if
$\lambda>\lambda_{\mu}$, then problem \eqref{eq} has at least one
nontrivial weak solution.
\end{thm}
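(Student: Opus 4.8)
The plan is to treat \eqref{eq} variationally. Working on the natural space $E$ of $L^{p(x)}$ divergence-free fields with $\nabla\times u\in L^{p(x)}$ and $u\cdot\textbf{n}=0$, normed by $\|u\|=\|\nabla\times u\|_{L^{p(x)}}$ (a norm here thanks to a Poincar\'e-type inequality on the simply connected $\Omega$, which also yields the compact embeddings $E\hookrightarrow L^{s(x)}$ for $1\le s(x)<p^{*}(x)$), the relevant energy is
$$J_{\lambda,\mu}(u)=\int_{\Omega}\frac{1}{p(x)}|\nabla\times u|^{p(x)}\,dx-\lambda\int_{\Omega}G(x,u)\,dx+\mu\int_{\Omega}F(x,u)\,dx.$$
Under $(F_{1})$--$(F_{2})$ and $(G_{1})$--$(G_{2})$ this is a $C^{1}$ functional whose nonzero critical points are exactly the nontrivial weak solutions of \eqref{eq}, so the theorem amounts to producing, respectively excluding, such critical points.

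For $(ii)$ I would apply the direct method, and the ordering $p^{+}<r(x)<q^{-}\le q(x)<p^{*}(x)$ is what makes this work. From $F(x,u)\ge\alpha|u|^{q(x)}$ and $G(x,u)=g(x)|u|^{r(x)}$, the pointwise Young inequalities $|u|^{r(x)}\le\varepsilon|u|^{q(x)}+C_{\varepsilon}|u|^{p(x)}$ and $|u|^{p(x)}\le\delta|u|^{q(x)}+C_{\delta}$ let me absorb the indefinite $\lambda$-term and the lower-order contribution into $\mu\alpha\int_{\Omega}|u|^{q(x)}\,dx$ up to an additive constant, so that $J_{\lambda,\mu}(u)\ge\frac{1}{p^{+}}\int_{\Omega}|\nabla\times u|^{p(x)}\,dx-C$ is coercive by the modular--norm relation. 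Since the curl part is convex and, via the compact embeddings, the $\lambda$- and $\mu$-terms are weakly continuous, $J_{\lambda,\mu}$ is weakly lower semicontinuous and attains its infimum at some $u_{0}\in E$. Finally, fixing any $\phi\in E$ with $\int_{\Omega}g(x)|\phi|^{r(x)}\,dx>0$ gives $J_{\lambda,\mu}(\phi)<0$ once $\lambda>\lambda_{\mu}$, whence $\inf_{E}J_{\lambda,\mu}<0=J_{\lambda,\mu}(0)$ and the minimizer $u_{0}$ is a nontrivial weak solution.

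For $(i)$ I would argue by contradiction. If $u\neq0$ solves \eqref{eq}, testing the weak form against $u$ and using $g(x,u)\cdot u=r(x)g(x)|u|^{r(x)}\ge0$ from $(G_{1})$ yields
$$\int_{\Omega}|\nabla\times u|^{p(x)}\,dx+\mu\int_{\Omega}f(x,u)\cdot u\,dx=\lambda\int_{\Omega}r(x)g(x)|u|^{r(x)}\,dx.$$
The right-hand side is at most $\lambda r^{+}\|g\|_{\infty}\int_{\Omega}|u|^{r(x)}\,dx\le\lambda C\max(\|u\|^{r^{-}},\|u\|^{r^{+}})$ by the embedding $E\hookrightarrow L^{r(x)}$, while $\int_{\Omega}|\nabla\times u|^{p(x)}\,dx\ge\min(\|u\|^{p^{-}},\|u\|^{p^{+}})$. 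Since $r^{-}>p^{+}$, in the regime $\|u\|\le1$ the curl modular alone already exceeds the $\lambda$-term whenever $\lambda$ is smaller than some $\lambda_{1}$, so no small nonzero solution can exist; for $\|u\|\ge1$ one must use the large parameter $\mu$, extracting from $(F_{2})$ a lower bound $\int_{\Omega}f(x,u)\cdot u\,dx\gtrsim\int_{\Omega}|u|^{q(x)}\,dx$ and interpolating $\int_{\Omega}|u|^{r(x)}\,dx\le(\int_{\Omega}|u|^{q(x)}\,dx)^{r/q}|\Omega|^{1-r/q}$, so that the identity forces $\int_{\Omega}|u|^{q(x)}\,dx\le(C\lambda/\mu)^{q/(q-r)}$; for $\lambda$ small and $\mu$ large this pushes $\|u\|$ back into the excluded small regime, a contradiction.

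The main obstacle is precisely this balancing in part $(i)$: the term carrying the large parameter, $\mu\int_{\Omega}f(x,u)\cdot u\,dx$, is only upper-bounded by $(F_{2})$, so the nonexistence rests on converting the lower bound $F(x,u)\ge\alpha|u|^{q(x)}$ into a genuine lower bound for $f(x,u)\cdot u$ and then pitting it, through variable-exponent interpolation, against the strictly subcritical $\lambda$-term. Because the exponents are variable, every one of these estimates must be carried out at the modular level and split according to whether $\|u\|\le1$ or $\|u\|\ge1$, which is where the real work lies; the existence half is comparatively standard once coercivity and weak lower semicontinuity are in hand.
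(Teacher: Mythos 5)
Your part (ii) is essentially the paper's own argument: coercivity of the energy via the lower bound $F(x,u)\geq\alpha|u|^{q(x)}$ together with a Young-inequality absorption of the $\lambda$-term, weak lower semicontinuity, the direct method, and nontriviality obtained from a fixed $w\in E$ with $\int_{\Omega}G(x,w)\,dx>0$ that makes the energy negative once $\lambda>\lambda_{\mu}$. No issues there.

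Part (i) follows the same broad strategy as the paper's Lemma \ref{non} (test the equation against $u$, use $\mu$ large to absorb the $q$-growth term, and squeeze $\|u\|$ against the embedding $E\hookrightarrow \textbf{L}^{r(x)}(\Omega)$ for $\lambda$ small), but it has a genuine gap precisely where you yourself locate ``the main obstacle'': you need $\int_{\Omega}f(x,u)\cdot u\,dx\gtrsim\int_{\Omega}|u|^{q(x)}\,dx$ in the large-norm regime (and at least $\int_{\Omega}f(x,u)\cdot u\,dx\geq0$ in the small-norm regime, which you drop silently), and you never derive either. Neither follows from $(F_{1})$--$(F_{2})$: those hypotheses bound $F$ from below and $|f|$ from above, and a potential satisfying $F(x,u)\geq\alpha|u|^{q(x)}$ can perfectly well have $f(x,u)\cdot u=\frac{d}{dt}F(x,tu)\big|_{t=1}<0$ on large sets (add to $\alpha|u|^{q(x)}$ a positive perturbation that decreases along some rays). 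Without an Ambrosetti--Rabinowitz-type link between $f\cdot u$ and $F$, the tested identity cannot be closed. The paper avoids this by writing the tested equation directly as $\int_{\Omega}|\nabla\times u|^{p(x)}\,dx=\lambda\int_{\Omega}G(x,u)\,dx-\mu\int_{\Omega}F(x,u)\,dx$, i.e.\ with the primitives $F$ and $G$ in place of $f(x,u)\cdot u$ and $g(x,u)\cdot u$, to which $(F_{2})$ and $(G_{1})$ apply immediately; it then plays the resulting upper bound $\|u\|^{p^{+}}\leq\lambda^{q^{-}/(q^{+}-r^{-})}A$ against the lower bound $\beta|u|_{r(x)}^{p^{+}}\leq\lambda\|g\|_{\infty}\max(|u|_{r(x)}^{r^{-}},|u|_{r(x)}^{r^{+}})$ coming from the embedding, which forces $|u|_{r(x)}$ to be simultaneously large and small as $\lambda\to0^{+}$. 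If you insist, as you do, on the honest tested identity involving $f(x,u)\cdot u$, you must either add a hypothesis relating $f\cdot u$ to $F$ or find another route; as written, the step ``extracting from $(F_{2})$ a lower bound $\int_{\Omega}f(x,u)\cdot u\,dx\gtrsim\int_{\Omega}|u|^{q(x)}\,dx$'' is an assertion, not a proof, and it is false in general.
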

\begin{thm}\label{thm2}
Assume that $(F_{1})-(F_{2})$ and $(G_{2})-(G_{3})$ hold. Then there
exist $\lambda_{2},\lambda_{3},r>0$ such that, if $\lambda \in
[\lambda_{2},\lambda_{3}]$, there exists $ \mu_{2}>0$ with the
following property: for each $\mu\in [0,\mu_{2}]$, equation
\eqref{eq} has at least three solutions whose norms are less than
$r$.
\end{thm}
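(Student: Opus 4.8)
The plan is to establish a three-critical-point theorem for the energy functional associated with \eqref{eq}. Working in the variable-exponent Sobolev space $E$ introduced for the $p(x)$-curl operator (cf. \cite{binlin}), I would define the functional
\[
I_{\lambda,\mu}(u)=\int_{\Omega}\frac{1}{p(x)}|\nabla\times u|^{p(x)}\,dx-\lambda\int_{\Omega}G(x,u)\,dx+\mu\int_{\Omega}F(x,u)\,dx,
\]
whose critical points are precisely the weak solutions of \eqref{eq}. Setting $\Phi(u)=\int_{\Omega}\frac{1}{p(x)}|\nabla\times u|^{p(x)}\,dx$, $\Psi(u)=\int_{\Omega}G(x,u)\,dx$, and $J(u)=\int_{\Omega}F(x,u)\,dx$, I would write $I_{\lambda,\mu}=\Phi-\lambda\Psi+\mu J$ and apply a variational principle of Ricceri type (the abstract three-critical-point theorem), which is the standard engine for producing exactly three solutions in a parameter band $\lambda\in[\lambda_{2},\lambda_{3}]$ under a small perturbation $\mu J$.

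The first block of steps is to verify the structural hypotheses of that abstract theorem. I would show that $\Phi$ is sequentially weakly lower semicontinuous, coercive, and that its derivative is a strictly monotone operator admitting a continuous inverse, which follows from the properties of the $p(x)$-curl modular together with the compact embedding $E\hookrightarrow L^{q(x)}(\Omega)$ valid since $q(x)<p^{\ast}(x)$. The growth bound $|g(x,u)|\le\mu(1+|u|^{r(x)-1})$ in $(G_{3})$ with $r(x)<p^{\ast}(x)$ makes $\Psi$ sequentially weakly continuous with a compact derivative, and the same kind of argument applies to $J$ using $(F_{2})$. These are essentially continuity and compactness verifications that I would carry out through the standard Nemytskii-operator estimates in variable-exponent spaces, controlling the modular by the norm via the usual $\rho$-norm inequalities.

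The crux is producing a point $\bar u\in E$ and a level $\rho>0$ witnessing the geometric separation that the three-critical-point theorem requires: one needs $\sup_{\Phi(u)<\rho}\Psi(u)$ to be strictly smaller than a competing quantity involving $\Psi(\bar u)$ and $\Phi(\bar u)$, so that the function $\lambda\mapsto\Phi(u)-\lambda\Psi(u)$ has the requisite non-monotone behaviour forcing a local minimum to coexist with two further critical points. Here the two-sided information in $(G_{3})$ is decisive: the condition $\limsup_{u\to0}G(x,u)/|u|^{p^{+}}=0$ kills the perturbation near the origin and lets me bound $\sup_{\Phi(u)<\rho}\Psi(u)$ by something like $o(\rho)$ as $\rho\to0^{+}$, while the positivity $\sup_{u\in E}\int_{\Omega}G(x,u)\,dx>0$ supplies a test vector $\bar u$ with $\Psi(\bar u)>0$. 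Choosing $\rho$ small and $\bar u$ appropriately normalized then yields the strict inequality and pins down the admissible interval $[\lambda_{2},\lambda_{3}]$. I expect this calibration of $\rho$ and $\bar u$, and the resulting estimates of the two suprema against each other, to be the main obstacle, since the variable exponent forces me to pass between modular and norm repeatedly and the constants must line up to open a nonempty $\lambda$-window.

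Once the unperturbed ($\mu=0$) problem is shown to admit three critical points by the above, the final step is a stability argument: because $J$ has a bounded, compact derivative on the sublevel set where the solutions live, there is $\mu_{2}>0$ such that for all $\mu\in[0,\mu_{2}]$ the perturbed functional $I_{\lambda,\mu}$ retains three critical points, all lying in a ball of radius $r$. This robustness under small $\mu J$ is exactly the content built into the Ricceri-type statement, so it reduces to checking the uniform bound on $\|J'\|$ over the relevant sublevel set, which again follows from $(F_{2})$. Collecting the three critical points as weak solutions of \eqref{eq} completes the proof.
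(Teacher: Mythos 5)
Your overall strategy coincides with the paper's: both proofs hinge on Ricceri's abstract three-critical-points theorem applied on $E=\textbf{W}^{p(x)}(\Omega)$ to the splitting $\phi-\lambda J-\mu\psi$, with the structural hypotheses (coercivity and sequential weak lower semicontinuity of $\phi$, the homeomorphism property of $\phi'$, compactness of $J'$ and $\psi'$, membership of $\phi$ in $R_E$ via uniform convexity) checked exactly as you describe, and with $\limsup_{u\to0}G(x,u)/|u|^{p^{+}}=0$ and $\sup_{u\in E}\int_\Omega G(x,u)\,dx>0$ supplying, respectively, the vanishing of $J(u)/\phi(u)$ at the origin and the positivity of $\beta=\sup_{\phi>0}J/\phi$. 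Note that the paper invokes the 2009 version of Ricceri's theorem, in which the perturbation $\mu\psi'$ and the uniform radius $r$ are already part of the conclusion, so no separate stability step is required; your reformulation via $\sup_{\Phi(u)<\rho}\Psi(u)$ is the older variant, but it carries the same analytic content near the origin.

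The genuine gap is at infinity. In the theorem needed here the admissible interval is $(1/\beta,1/\alpha)$ with
$$\alpha=\max\Bigl\{0,\ \limsup_{\|u\|\rightarrow+\infty}\frac{J(u)}{\phi(u)},\ \lim_{u\rightarrow 0}\frac{J(u)}{\phi(u)}\Bigr\},$$
so besides the behaviour at $0$ you must also prove $\limsup_{\|u\|\rightarrow+\infty}J(u)/\phi(u)\le 0$ (equivalently, in your variant, the coercivity of $\Phi-\lambda\Psi$ for every admissible $\lambda$). Your proposal never addresses this, and it does not follow from the growth bound $|g(x,u)|\le\mu(1+|u|^{r(x)-1})$ alone: since $r$ may exceed $p^{+}$, the term $\int_\Omega G(x,u)\,dx$ could dominate $\int_\Omega|\nabla\times u|^{p(x)}\,dx$ along unbounded sequences, making $\alpha=+\infty$ and the interval $(1/\beta,1/\alpha)$ empty. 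The paper closes this with the remaining clause of $(G_3)$, namely $|G(x,u)|\le\gamma|u|^{k(x)}$ for $|u|>L$ with $k<p^{-}$, which yields $J(u)\le c\bigl(1+\|u\|^{k^{-}}+\|u\|^{k^{+}}\bigr)=o\bigl(\|u\|^{p^{-}}\bigr)$ and hence $\alpha=0$; you quote the small-$u$ and positivity clauses of $(G_3)$ but never this one, and without it the argument does not close.
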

We have divided this paper into $3$ sections. In section $2$ we
give some notations and we recall some necessary definitions. In
section $3$ we prove our main results.
\section{Function spaces with variable exponent and preliminary results}
In this section we recall some basic definitions and properties
concerning the basic function spaces with variable exponent and the
space $\textbf{W}^{p(x)}(\Omega)$ of divergence free vector
functions belonging to $\textbf{L}^{p(x)}(\Omega)$ with curl in
$\textbf{L}^{p(x)}(\Omega)$. We refer to \cite{san,Fan,
Fann,zho,musi,radrep,binlin} and the references therein.\\
Consider the set
$$C_+(\overline\Omega)=\{p\in C(\overline\Omega),\;p(x)>1\;{\rm
for}\; {\rm all}\;x\in\overline\Omega\}.$$ For any $p\in
C_+(\overline\Omega)$ define
    $$p^+=\sup_{x\in\overline{\Omega}}p(x)\qquad\mbox{and}\qquad p^-=
    \inf_{x\in\overline{\Omega}}p(x),$$ and the {\it variable exponent Lebesgue space}
    $$L^{p(x)}(\Omega)=\left\{u;\ u\ \mbox{is
measurable real-valued function such that }
 \int_\Omega|u(x)|^{p(x)}\;dx<\infty\right\}.$$ This vector space is
a Banach space if it is endowed with the {\it Luxemburg norm}, which
 is defined by
 $$|u|_{p(x)}=\inf\left\{\mu>0;\;\int_\Omega\left|
 \frac{u(x)}{\mu}\right|^{p(x)}\;dx\leq 1\right\}.$$ The function
 space $L^{p(x)}(\Omega)$ is reflexive if and only if $1 < p^-\leq
 p^+<\infty$
  and continuous functions with compact support
 are dense in $L^{p(x)}(\Omega)$ if $p^{+}<\infty$.

 Let $L^{q(x)}(\Omega)$ denote the conjugate space of
 $L^{p(x)}(\Omega)$, where $1/p(x)+1/q(x)=1$. If $u\in
 L^{p(x)}(\Omega)$ and $v\in L^{q(x)}(\Omega)$ then  the following
 H\"older-type inequality holds:
 \begin{equation}\label{Hol}
 \left|\int_\Omega uv\;dx\right|\leq\left(\frac{1}{p^-}+
 \frac{1}{q^-}\right)|u|_{p(x)}|v|_{q(x)}\,.
 \end{equation}
 Moreover, if $p_j\in C_+(\overline\Omega)$ ($j=1,2,3$) and
 $$\frac{1}{p_1(x)}+\frac{1}{p_2(x)}+\frac{1}{p_3(x)}=1$$
 then for all $u\in L^{p_1(x)}(\Omega)$, $v\in L^{p_2(x)}(\Omega)$,
 $w\in L^{p_3(x)}(\Omega)$
 \begin{equation}\label{Hol1}
 \left|\int_\Omega uvw\;dx\right|\leq\left(\frac{1}{p_1^-}+
 \frac{1}{p_2^-}+\frac{1}{p_3^-}\right)|u|_{p_1(x)}|v|_{p_2(x)}|w|_{p_3(x)}\,.
 \end{equation}

 The inclusion between Lebesgue spaces also generalizes the classical
 framework, namely if $0 <|\Omega|<\infty$ and $p_1$, $p_2$ are
 variable exponents so that $p_1\leq p_2$
  in $\Omega$, then there exists the continuous embedding
 $L^{p_2(x)}(\Omega)\hookrightarrow L^{p_1(x)}(\Omega)$.
 \begin{prp}\label{pr1}
 If we denote
 $$\rho(u)=\displaystyle \int_{\Omega}|u|^{p(x)}dx, \ \ \forall u\in L^{p(x)}(\Omega),$$
 then\\
 $(i)$ $|u|_{p(x)}<1 (\mbox{resp}., =1;>1)\Leftrightarrow \rho(u)<1(\mbox{resp}., =1;>1)$;\\
 $(ii)$ $|u|_{p(x)}>1 \Rightarrow |u|_{p(x)}^{p^{-}}\leq \rho(u) \leq
 |u|_{p(x)}^{p^{+}}$; and\\
 $(iii)$ $|u|_{p(x)}<1 \Rightarrow |u|_{p(x)}^{p^{+}}\leq \rho(u)
 \leq |u|_{p(x)}^{p^{-}}$.
 \end{prp}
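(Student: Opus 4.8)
The plan is to reduce all three assertions to a single underlying fact: for $u\neq0$ the Luxemburg norm is precisely the value of $\mu$ at which the modular of the rescaled function equals one. To this end I would first study the auxiliary function
$$\Phi_u(\mu)=\int_\Omega\left|\frac{u(x)}{\mu}\right|^{p(x)}\,dx,\qquad \mu>0.$$
Since $1<p^-\le p(x)\le p^+<\infty$, for each fixed $x$ with $u(x)\neq0$ the map $\mu\mapsto|u(x)/\mu|^{p(x)}=|u(x)|^{p(x)}\mu^{-p(x)}$ is continuous and strictly decreasing, tending to $+\infty$ as $\mu\to0^+$ and to $0$ as $\mu\to\infty$. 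By the dominated convergence theorem (finiteness of $p^+$ providing the domination) $\Phi_u$ is continuous on $(0,\infty)$; when $u\neq0$ it is strictly decreasing with $\lim_{\mu\to0^+}\Phi_u(\mu)=+\infty$ and $\lim_{\mu\to\infty}\Phi_u(\mu)=0$. Hence there is a unique $\mu_0>0$ with $\Phi_u(\mu_0)=1$, the solution set of $\Phi_u(\mu)\le1$ is exactly $[\mu_0,\infty)$, and therefore the infimum defining the norm is attained at $\mu_0$, i.e. $|u|_{p(x)}=\mu_0$ and
$$\int_\Omega\left|\frac{u(x)}{|u|_{p(x)}}\right|^{p(x)}\,dx=1\qquad(u\neq0).$$

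For part $(i)$ I would read off the three cases from this identity together with the monotonicity of $\Phi_u$. If $|u|_{p(x)}=1$ then $\rho(u)=\Phi_u(1)=1$. If $|u|_{p(x)}<1$, then evaluating the decreasing $\Phi_u$ at the larger argument $1$ gives $1=\Phi_u(|u|_{p(x)})>\Phi_u(1)=\rho(u)$, so $\rho(u)<1$; the case $|u|_{p(x)}>1$ is symmetric and yields $\rho(u)>1$. As the three alternatives for $|u|_{p(x)}$ are mutually exclusive and exhaustive, every implication reverses, so the equivalences follow (the trivial case $u=0$ being immediate).

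For parts $(ii)$ and $(iii)$ I would exploit the unit-modular identity by factoring out the power of $\mu_0=|u|_{p(x)}$:
$$\rho(u)=\int_\Omega|u(x)|^{p(x)}\,dx=\int_\Omega\mu_0^{p(x)}\,\left|\frac{u(x)}{\mu_0}\right|^{p(x)}\,dx,$$
and then bounding $\mu_0^{p(x)}$. When $|u|_{p(x)}>1$ one has $\mu_0^{p^-}\le\mu_0^{p(x)}\le\mu_0^{p^+}$, so pulling the constants out and using $\Phi_u(\mu_0)=1$ gives $|u|_{p(x)}^{p^-}\le\rho(u)\le|u|_{p(x)}^{p^+}$, which is $(ii)$. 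When $|u|_{p(x)}<1$ the base is below one, the exponent inequalities reverse to $\mu_0^{p^+}\le\mu_0^{p(x)}\le\mu_0^{p^-}$, and the identical computation produces $|u|_{p(x)}^{p^+}\le\rho(u)\le|u|_{p(x)}^{p^-}$, which is $(iii)$.

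The only genuinely delicate point is the first paragraph: showing that the infimum in the Luxemburg norm is attained and coincides with the unique zero of $\Phi_u(\mu)-1$. This rests on the continuity of $\Phi_u$, where the hypothesis $p^+<\infty$ is used to dominate the integrand and justify passing the limit under the integral sign. Once this unit-modular identity is in hand, everything else is elementary manipulation and the monotonicity of $\Phi_u$.
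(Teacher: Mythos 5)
Your proof is correct: the reduction of all three parts to the unit--modular identity $\int_\Omega |u(x)/|u|_{p(x)}|^{p(x)}\,dx=1$ (obtained from the continuity, strict monotonicity and limiting behaviour of $\mu\mapsto\Phi_u(\mu)$, with $p^+<\infty$ justifying dominated convergence) is exactly the classical argument for these modular--norm relations. The paper itself states Proposition \ref{pr1} without proof, recalling it as a known fact from the standard references on variable exponent spaces (Fan--Zhao, Musielak), and your argument is the one given there, so there is nothing to reconcile.
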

 \begin{prp}\label{pr2}
 If $u,u_{n}\in L^{p(x)}(\Omega)$ and  $n\in \mathbb{N}$, then the
 following statements are equivalent:\\
 $(1)$ $\displaystyle \lim_{n\rightarrow +\infty}
 |u_{n}-u|_{p(x)}=0$.\\
 $(2)$  $\displaystyle \lim_{n\rightarrow +\infty} \rho(
 u_{n}-u)=0.$\\
 $(3)$ $u_{n}\rightarrow u$ in measure in $\Omega$ and
 $\displaystyle \lim_{n\rightarrow +\infty}\rho(u_{n})=\rho(u).$
 \end{prp}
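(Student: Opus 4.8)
The plan is to dispose of $(1)\Leftrightarrow(2)$ using only Proposition \ref{pr1}, and then to establish the two implications $(2)\Rightarrow(3)$ and $(3)\Rightarrow(2)$, which carry the real content. For $(1)\Rightarrow(2)$, once $|u_n-u|_{p(x)}<1$ (true for large $n$), part $(iii)$ of Proposition \ref{pr1} gives $\rho(u_n-u)\le|u_n-u|_{p(x)}^{p^-}\to0$; conversely, if $\rho(u_n-u)\to0$ then $\rho(u_n-u)<1$ eventually, so by part $(i)$ $|u_n-u|_{p(x)}<1$ and part $(iii)$ yields $|u_n-u|_{p(x)}^{p^+}\le\rho(u_n-u)\to0$, whence $(1)$. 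Thus I may work with the modular throughout. The workhorse for the rest is the elementary Young-type inequality: for each $\varepsilon>0$ there is a constant $C_\varepsilon$, finite and bounded uniformly over the compact exponent range $[p^-,p^+]$ (which is where $1<p^-\le p^+<\infty$ is used), such that $(s+t)^{p(x)}\le(1+\varepsilon)s^{p(x)}+C_\varepsilon t^{p(x)}$ for all $s,t\ge0$ and a.e.\ $x$. Together with Fatou's lemma and the subsequence principle, this lets me avoid any direct estimate of the awkward pointwise difference $|u_n|^{p(x)}-|u|^{p(x)}$.

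For $(2)\Rightarrow(3)$, convergence in measure follows from a Chebyshev estimate: on $E_n^\delta=\{|u_n-u|\ge\delta\}$ one has $|u_n-u|^{p(x)}\ge\min(\delta^{p^-},\delta^{p^+})$, so $|E_n^\delta|\le\rho(u_n-u)/\min(\delta^{p^-},\delta^{p^+})\to0$. To get $\rho(u_n)\to\rho(u)$ I argue along a subsequence on which $u_n\to u$ a.e.\ (available from convergence in measure). Fatou applied to $|u_n|^{p(x)}\ge0$ gives $\rho(u)\le\liminf\rho(u_n)$, while Fatou applied to the nonnegative functions $(1+\varepsilon)|u|^{p(x)}+C_\varepsilon|u_n-u|^{p(x)}-|u_n|^{p(x)}$ (nonnegative since $|u_n|\le|u|+|u_n-u|$) has pointwise $\liminf$ equal to $\varepsilon|u|^{p(x)}$ and forces $\limsup\rho(u_n)\le\rho(u)$, after using $\rho(u_n-u)\to0$ and $\rho(u)<\infty$. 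Hence $\rho(u_n)\to\rho(u)$ along the subsequence, and since every subsequence admits such a further subsequence, along the whole sequence.

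For $(3)\Rightarrow(2)$, I again pass to an a.e.-convergent subsequence and apply Fatou to the nonnegative functions $(1+\varepsilon)|u_n|^{p(x)}+C_\varepsilon|u|^{p(x)}-|u_n-u|^{p(x)}$, nonnegative by the Young inequality with $s=|u_n|,\,t=|u|$ since $|u_n-u|\le|u_n|+|u|$. Their pointwise $\liminf$ is $(1+\varepsilon+C_\varepsilon)|u|^{p(x)}$, whereas the $\liminf$ of their integrals equals $(1+\varepsilon+C_\varepsilon)\rho(u)-\limsup\rho(u_n-u)$ by the hypothesis $\rho(u_n)\to\rho(u)$. Fatou then yields $\limsup\rho(u_n-u)\le0$, i.e.\ $\rho(u_n-u)\to0$, and the subsequence principle upgrades this to the full sequence.

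The main obstacle is the modular convergence $\rho(u_n)\to\rho(u)$ and its reverse role in $(3)$: the variable exponent makes the naive bound on $|u_n|^{p(x)}-|u|^{p(x)}$ intractable, and there is no a priori dominating function, so dominated convergence is unavailable. It is precisely the combination of the Young-type splitting with Fatou's lemma that resolves this, and the finiteness $p^+<\infty$ is what guarantees both the positive Chebyshev constant $\min(\delta^{p^-},\delta^{p^+})$ and the boundedness of $C_\varepsilon$.
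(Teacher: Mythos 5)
The paper does not actually prove Proposition \ref{pr2}: it is quoted as a standard preliminary fact about modular versus norm convergence, with the reader referred to the cited literature (Fan--Zhao, Musielak, etc.), so there is no in-paper argument to compare yours against. Your proof is correct and complete on its own terms. The equivalence $(1)\Leftrightarrow(2)$ via parts $(i)$ and $(iii)$ of Proposition \ref{pr1} is exactly the standard reduction; the Chebyshev bound $|\{|u_n-u|\ge\delta\}|\le \rho(u_n-u)/\min(\delta^{p^-},\delta^{p^+})$ correctly uses $p^+<\infty$; and the two Fatou arguments applied to the nonnegative functions $(1+\varepsilon)s^{p(x)}+C_\varepsilon t^{p(x)}-(s+t)^{p(x)}$ (in the two symmetric roles) are a clean variable-exponent version of the classical Riesz/Brezis--Lieb device for upgrading a.e.\ convergence plus convergence of modulars to modular convergence of the difference, with the subsequence principle correctly invoked to return from a.e.-convergent subsequences to the full sequence. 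The only points worth flagging are cosmetic: the uniformity of $C_\varepsilon$ over $p\in[p^-,p^+]$ deserves the one-line convexity justification $(s+t)^p\le(1-\theta)^{1-p}s^p+\theta^{1-p}t^p$, and in the step $(3)\Rightarrow(2)$ the $\varepsilon$ is not needed (the crude constant $2^{p^+-1}$ already suffices since the hypothesis gives exact convergence of $\rho(u_n)$); neither affects correctness.
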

  If $k$ is a positive integer and $p\in C_+(\overline\Omega)$, we define the variable exponent  Sobolev space by
  $$
  W^{k,p(x)}(\Omega)=\{u\in L^{p(x)}(\Omega):\; D^\alpha u\in
  L^{p(x)} (\Omega),\ \mbox{for all}\ |\alpha|\leq k \}.
  $$
  Here, $\alpha =(\alpha_1,\ldots ,\alpha_N)$ is a multi-index, $|\alpha|=\sum_{i=1}^N\alpha_i$ and
  $$D^\alpha u=\frac{\partial^{|\alpha|}u}{\partial^{\alpha_1}_{x_1}\ldots \partial^{\alpha_N}_{x_N}}\,.$$

  On $W^{k,p(x)}(\Omega)$ we  consider the following norm
  $$
  \|u\|_{k,p(x)}=\sum_{|\alpha|\leq k}|D^\alpha u|_{p(x)}.
  $$
  Then $W^{k,p(x)}(\Omega)$ is a reflexive and separable Banach space. Let $W^{k,p(x)}_0(\Omega)$
  denote the closure of $C^\infty_0(\Omega)$ in
  $W^{k,p(x)}(\Omega)$.
\begin{thm}\label{rad}
Let $q\in C(\overline{\Omega})$ such that $1\leq
q(x)<\frac{3p(x)}{3-p(x)}$ in  $\overline{\Omega}$. Then the
embedding $W^{1,p(x)}(\Omega)\hookrightarrow
  L^{q(x)}(\Omega)$ is compact.
\end{thm}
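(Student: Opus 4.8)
The plan is to reduce the assertion to the classical Rellich--Kondrachov theorem together with a uniform-integrability argument, isolating the genuinely deep input — the \emph{critical} continuous Sobolev embedding — and treating the strict subcriticality $q(x)<p^{*}(x)$ as the mechanism that converts continuity into compactness. First I would record the critical continuous embedding $W^{1,p(x)}(\Omega)\hookrightarrow L^{p^{*}(x)}(\Omega)$, where $p^{*}(x)=\frac{3p(x)}{3-p(x)}$; this is well defined since $p^{+}<3=N$. I expect this to be the main obstacle, and it is precisely here that hypothesis $(P)$ is used: the log-H\"older modulus of continuity of the exponent rules out the Lavrentiev phenomenon, secures the density of smooth functions, and yields the variable-exponent Sobolev inequality. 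In the write-up this step would either be invoked from the literature (see \cite{Fan,radrep}) or established through the Riesz-potential estimate adapted to $p(x)$.

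Next I would extract an almost-everywhere convergent subsequence by descending to a constant exponent. Since $\Omega$ is bounded and $p(x)\geq p^{-}$, the inclusion $L^{p(x)}(\Omega)\hookrightarrow L^{p^{-}}(\Omega)$ is continuous, so any sequence $(u_{n})$ bounded in $W^{1,p(x)}(\Omega)$ is bounded, together with its gradient, in $L^{p^{-}}(\Omega)$, hence bounded in the classical space $W^{1,p^{-}}(\Omega)$. As $p^{-}<3$, the classical Rellich--Kondrachov theorem gives a subsequence converging in $L^{p^{-}}(\Omega)$, and passing to a further subsequence we obtain $u_{n}\to u$ almost everywhere in $\Omega$.

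Finally I would upgrade this to convergence in $L^{q(x)}(\Omega)$ via Vitali's theorem. By Step~1 the sequence is bounded in $L^{p^{*}(x)}(\Omega)$, say $|u_{n}|_{p^{*}(x)}\leq M$, and by continuity of $p^{*}-q$ on the compact set $\overline{\Omega}$ the subcriticality is uniform, $\inf_{x}(p^{*}(x)-q(x))>0$. For a measurable $E\subset\Omega$ I would apply the H\"older inequality \eqref{Hol} with the exponent pair $\tfrac{p^{*}(x)}{q(x)}$ and its conjugate to write
\[
\int_{E}|u_{n}-u|^{q(x)}\,dx\leq C\,\bigl|\,|u_{n}-u|^{q(x)}\bigr|_{p^{*}(x)/q(x)}\,|\chi_{E}|_{(p^{*}/q)'(x)},
\]
where the first factor is controlled by $M$ through Proposition~\ref{pr1} and the second tends to $0$ as $|E|\to0$. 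This establishes uniform integrability of $\{|u_{n}-u|^{q(x)}\}$, so Vitali's theorem together with $u_{n}\to u$ a.e.\ yields $\rho(u_{n}-u)=\int_{\Omega}|u_{n}-u|^{q(x)}\,dx\to0$. By the equivalence of modular and norm convergence in Proposition~\ref{pr2}, this gives $|u_{n}-u|_{q(x)}\to0$, which proves compactness of the embedding.
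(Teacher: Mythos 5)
Your proof is correct, and in fact the paper offers no proof of this statement at all: Theorem \ref{rad} is quoted as a known result from the variable-exponent literature (the references \cite{Fan,radrep} and related works), so there is no argument in the paper to compare against. Your three-step scheme --- critical continuous embedding $W^{1,p(x)}\hookrightarrow L^{p^{*}(x)}$ under the log-H\"older condition $(P)$, almost-everywhere convergence via the constant-exponent space $W^{1,p^{-}}(\Omega)$ and classical Rellich--Kondrachov, then Vitali's theorem using the uniform gap $\inf_{\overline{\Omega}}(p^{*}-q)>0$ to get equi-integrability of $|u_{n}-u|^{q(x)}$ and finally Proposition \ref{pr2} to pass from modular to norm convergence --- is exactly the standard proof given in those references, and each step is sound (the boundedness of the conjugate exponent $p^{*}/(p^{*}-q)$ is precisely what makes $|\chi_{E}|_{(p^{*}/q)'(x)}\to 0$ as $|E|\to 0$). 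The only part you leave as a black box is the critical embedding itself, which is legitimate here since the paper treats the entire theorem the same way.
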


  Let $\textbf{L}^{p(x)}(\Omega)=L^{p(x)}(\Omega)\times L^{p(x)}(\Omega) \times L^{p(x)}(\Omega)
  $ and define
  $$E=\textbf{W}^{p(x)}(\Omega)=\{v\in \textbf{L}^{p(x)}(\Omega): \nabla \times v \in \textbf{L}^{p(x)}(\Omega), \nabla .v=0, v.\textbf{n}|\partial \Omega=0 \},$$
  where $\textbf{n}$ denotes the outward unit normal vector to $\partial
  \Omega$. Equip $\textbf{W}^{p(x)}(\Omega)$ with the following norm
  $$\|v\|=\|v\|_{\textbf{L}^{p(x)}(\Omega)}+\|\nabla \times v\|_{\textbf{L}^{p(x)}(\Omega)}.$$
  If $p^{-}>1$, then by Theorem $2.1$ of \cite{san},
  $E=\textbf{W}^{p(x)}(\Omega)$ is a closed subspace of
  $\textbf{W}^{1,p(x)}_{\textbf{n}}(\Omega)$, where
  $$\textbf{W}^{1,p(x)}_{\textbf{n}}(\Omega)=\{v\in \textbf{W}^{1,p(x)}(\Omega),  v.\textbf{n}|\partial \Omega=0 \}$$
  and
  $$\textbf{W}^{1,p(x)}(\Omega)=W^{1,p(x)}(\Omega)\times W^{1,p(x)}(\Omega)\times W^{1,p(x)}(\Omega).$$
  Thus we have the following theorem.
  \begin{thm}\label{sann}
  Assume that $1<p^{-}\leq p^{+}<\infty$ and $p$ satisfies $(P)$. Then
  $\textbf{W}^{p(x)}(\Omega)$ is a closed subspace of
  $\textbf{W}^{1,p(x)}_{\textbf{n}}(\Omega)$. Moreover, if
  $p^{-}>\frac{6}{5}$, then $\|\nabla \times .\|$ is a norm in
  $\textbf{W}^{p(x)}(\Omega)$ and there exists
  $C=C(N,p^{-},p^{+})>0$ such that
  $$\|v\|_{\textbf{W}^{1,p(x)}(\Omega)}\leq C \|\nabla \times v\|_{{\textbf{L}}^{p(x)}(\Omega)}.$$
  \end{thm}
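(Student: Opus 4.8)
The first assertion---that $\textbf{W}^{p(x)}(\Omega)$ is a closed subspace of $\textbf{W}^{1,p(x)}_{\textbf{n}}(\Omega)$---is already available from Theorem $2.1$ of \cite{san} once $p^->1$, so the plan is to concentrate on the Gaffney/Poincar\'e-type estimate $\|v\|_{\textbf{W}^{1,p(x)}(\Omega)}\leq C\|\nabla\times v\|_{\textbf{L}^{p(x)}(\Omega)}$, from which the remaining claims follow. Once this inequality is known, $\|\nabla\times\cdot\|$ inherits homogeneity and subadditivity from the Luxemburg norm on $\textbf{L}^{p(x)}(\Omega)$, while definiteness is immediate: $\nabla\times v=0$ forces $\|v\|_{\textbf{W}^{1,p(x)}}=0$, hence $v=0$. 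Thus the whole theorem is reduced to proving this one inequality.

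The core of the argument is a singular-integral representation of the full gradient in terms of the curl. For $v\in\textbf{W}^{p(x)}(\Omega)$ one has $\nabla\cdot v=0$, so the vector identity $-\Delta v=\nabla\times(\nabla\times v)-\nabla(\nabla\cdot v)$ reduces to $-\Delta v=\nabla\times(\nabla\times v)$. Using that $\Omega$ is bounded, simply connected, with $C^{1,1}$ boundary and that $v\cdot\textbf{n}=0$ on $\partial\Omega$, I would construct a divergence-free vector potential $A$ with $v=\nabla\times A$ in a suitable gauge $\nabla\cdot A=0$, so that $-\Delta A=\nabla\times v$ with the appropriate boundary conditions; elliptic regularity then expresses $\nabla v=\nabla(\nabla\times A)$ through a Calder\'on--Zygmund operator acting on $\nabla\times v$. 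The passage from the constant-exponent estimate to the $L^{p(x)}$ estimate is exactly where hypothesis $(P)$ enters: $(P)$ is the log-H\"older continuity condition, which together with $1<p^-\leq p^+<\infty$ guarantees the boundedness of Calder\'on--Zygmund and Riesz-type operators on $L^{p(x)}(\Omega)$. This yields $\|\nabla v\|_{\textbf{L}^{p(x)}}\leq C\|\nabla\times v\|_{\textbf{L}^{p(x)}}$ modulo a lower-order term controlled by $\|v\|_{\textbf{L}^{p(x)}}$.

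It remains to absorb $\|v\|_{\textbf{L}^{p(x)}}$, and here the restriction $p^->\tfrac{6}{5}$ is used, since it makes the chain of Sobolev embeddings in $\mathbb{R}^{3}$ close and renders the compact embedding of Theorem \ref{rad} available. I would argue by contradiction: if no uniform bound $\|v\|_{\textbf{L}^{p(x)}}\leq C\|\nabla\times v\|_{\textbf{L}^{p(x)}}$ held, then a normalized sequence $v_{n}$ with $\|v_{n}\|_{\textbf{L}^{p(x)}}=1$ and $\|\nabla\times v_{n}\|_{\textbf{L}^{p(x)}}\to 0$ would, by the gradient bound of the previous step and compactness, converge strongly to some $v$ with $\nabla\times v=0$, $\nabla\cdot v=0$ and $v\cdot\textbf{n}=0$. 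Since $\Omega$ is simply connected, this harmonic field must vanish, contradicting $\|v\|_{\textbf{L}^{p(x)}}=1$. Combining the two bounds gives the asserted inequality, and hence both the norm equivalence and the fact that $\|\nabla\times\cdot\|$ is a norm.

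The main obstacle is the variable-exponent singular-integral step together with the boundary-adapted vector-potential construction: one must verify that the Calder\'on--Zygmund theory applies on the bounded domain $\Omega$ with the normal boundary condition $v\cdot\textbf{n}=0$, which is precisely what $(P)$ and the $C^{1,1}$ regularity of $\partial\Omega$ secure. The triviality of the kernel---the space of harmonic fields---relies essentially on the simple connectedness of $\Omega$; without it the estimate, and therefore the claim that $\|\nabla\times\cdot\|$ is a norm, would fail.
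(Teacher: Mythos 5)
The paper does not actually prove Theorem \ref{sann}: it is imported wholesale from Theorem 2.1 of the reference by Antontsev, Miranda and Santos (\cite{san}), with the sentence ``Thus we have the following theorem'' serving as the entire justification. So there is no in-paper argument to compare yours against; the relevant comparison is with the strategy of \cite{san}, and your outline follows essentially that standard route: reduce everything to the Gaffney--Friedrichs inequality, obtain the gradient bound from a div--curl/vector-potential representation plus Calder\'on--Zygmund theory on $L^{p(x)}$ (where the log-H\"older condition $(P)$ is exactly what makes the singular integrals bounded), and remove the lower-order term by a compactness--contradiction argument whose kernel is killed by simple connectedness. That is the right architecture, and your identification of where $(P)$, the $C^{1,1}$ boundary, and simple connectedness each enter is accurate.

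Two caveats. First, your explanation of the hypothesis $p^{-}>\tfrac{6}{5}$ is off target: the compact embedding of Theorem \ref{rad} is available for any $p^{-}>1$, so it cannot be the reason for this threshold. The point of $\tfrac{6}{5}$ in three dimensions is that $(6/5)^{\ast}=2$, so $\textbf{W}^{1,p(x)}(\Omega)\hookrightarrow \textbf{L}^{2}(\Omega)$; this is what lets the $L^{2}$-based Helmholtz--Weyl/Hodge decomposition and the construction of the divergence-free vector potential with the correct boundary conditions be brought to bear on functions that are a priori only in $\textbf{W}^{p(x)}(\Omega)$. Second, the step you yourself flag as the main obstacle --- the boundary-adapted vector-potential construction and the variable-exponent Calder\'on--Zygmund estimate on a bounded domain with the condition $v\cdot\textbf{n}=0$ --- is where all the real work lives, and your proposal only gestures at it. As a self-contained proof it is therefore incomplete; as a reconstruction of the argument the paper silently outsources to \cite{san}, it is faithful in outline.
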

  \begin{corollary}\label{an}
  By Theorems \ref{rad} and \ref{sann}, the embedding $\textbf{W}^{p(x)}(\Omega)\hookrightarrow
  \textbf{L}^{q(x)}(\Omega)$ is compact, with $1<p^{-}\leq p^{+}<3, \ \ q\in
  C(\overline{\Omega})$ and $1\leq q(x)<\frac{3p(x)}{3-p(x)}$ in
  $\overline{\Omega}$. Moreover, $(\textbf{W}^{p(x)}(\Omega),
  \|.\|)$
  is a uniformly convex and reflexive Banach space.
  \end{corollary}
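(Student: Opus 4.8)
The plan is to deduce both assertions from the two theorems just established together with the standard structural properties of the variable exponent Lebesgue spaces.

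For the compact embedding I would first apply Theorem \ref{rad} coordinatewise. The hypothesis $1\le q(x)<\frac{3p(x)}{3-p(x)}$ guarantees that $W^{1,p(x)}(\Omega)\hookrightarrow L^{q(x)}(\Omega)$ is compact for each scalar component, and a finite Cartesian product of compact embeddings is compact; hence $\textbf{W}^{1,p(x)}(\Omega)\hookrightarrow\textbf{L}^{q(x)}(\Omega)$ is compact. By Theorem \ref{sann}, $\textbf{W}^{p(x)}(\Omega)$ is a closed subspace of $\textbf{W}^{1,p(x)}_{\textbf{n}}(\Omega)\subseteq\textbf{W}^{1,p(x)}(\Omega)$, and the restriction of a compact operator to a closed subspace remains compact. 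Composing the inclusion $\textbf{W}^{p(x)}(\Omega)\hookrightarrow\textbf{W}^{1,p(x)}(\Omega)$ with the compact embedding above yields the claim.

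For reflexivity I would use that $L^{p(x)}(\Omega)$ is reflexive whenever $1<p^{-}\le p^{+}<\infty$, so the product $\textbf{L}^{p(x)}(\Omega)$, and hence $\textbf{W}^{1,p(x)}(\Omega)$, are reflexive (the latter because the canonical embedding into a finite product of copies of $\textbf{L}^{p(x)}(\Omega)$ realizes it isometrically as a closed subspace). A closed subspace of a reflexive space is reflexive, so $\textbf{W}^{p(x)}(\Omega)$ is reflexive; and reflexivity is invariant under passage to the equivalent norm furnished by Theorem \ref{sann}.

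The uniform convexity is the delicate point, precisely because uniform convexity is a property of the chosen norm and is \emph{not} preserved by arbitrary equivalent norms. The key observation is that, by Theorem \ref{sann}, $\|\nabla\times\cdot\|_{\textbf{L}^{p(x)}(\Omega)}$ is a norm on $\textbf{W}^{p(x)}(\Omega)$ equivalent to $\|\cdot\|$, and the curl map $v\mapsto\nabla\times v$ is a linear isometry of $(\textbf{W}^{p(x)}(\Omega),\|\nabla\times\cdot\|)$ onto its image in $\textbf{L}^{p(x)}(\Omega)$; since $\textbf{W}^{p(x)}(\Omega)$ is complete, this image is a closed subspace. Invoking the known uniform convexity of $\textbf{L}^{p(x)}(\Omega)$ for $1<p^{-}\le p^{+}<\infty$, together with the facts that a closed subspace of a uniformly convex space is uniformly convex and that uniform convexity transfers across a surjective linear isometry, one concludes that $\textbf{W}^{p(x)}(\Omega)$ endowed with this curl norm is uniformly convex. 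I expect the main obstacle to be making the choice of working norm explicit: one must argue with $\|\nabla\times\cdot\|$ rather than with the sum norm $\|\cdot\|$, and justify—via Theorem \ref{sann}—that this is the norm relevant for the variational framework.
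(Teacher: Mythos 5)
Your argument is correct and follows exactly the route the paper intends: the paper offers no proof beyond the words ``By Theorems \ref{rad} and \ref{sann}'', and your deduction --- componentwise application of Theorem \ref{rad}, composition with the continuous inclusion of the closed subspace $\textbf{W}^{p(x)}(\Omega)\hookrightarrow\textbf{W}^{1,p(x)}(\Omega)$ furnished by Theorem \ref{sann}, and the standard closed-subspace arguments for reflexivity and uniform convexity via the isometry $v\mapsto\nabla\times v$ into $\textbf{L}^{p(x)}(\Omega)$ --- is the natural expansion of that citation. Your observation that uniform convexity is thereby established only for the equivalent curl norm $\|\nabla\times\cdot\|_{\textbf{L}^{p(x)}(\Omega)}$ rather than for the sum norm $\|\cdot\|$ named in the statement is a genuine and correctly identified imprecision in the corollary itself, which the paper leaves unaddressed.
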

Define for any $\lambda,\mu>0$ and $u\in E$ ,
$$\phi(u)=\displaystyle \int_{\Omega}|\nabla \times
u|^{p(x)}dx, J(u)=\displaystyle \int_{\Omega}G(x,u)dx$$
$$\psi(u)=\displaystyle \int_{\Omega}-F(x,u)dx \ \ \mbox{and} \ \ I(u)=\phi(u)-\lambda J(u)-\mu \psi(u).$$
It is easy to see, under assumptions $(P)$, $(F_{1})-(F_{2})$ and
$(G_{1})-(G_{3})$, that $I,\phi,J,\psi\in C^{1}(E,\mathbb{R}).$
\begin{definition}
For every $\lambda,\mu>0$, we say that $u\in E$ is a weak solution
of problem \eqref{eq}, if
$$\displaystyle \int_{\Omega}|\nabla \times u|^{p(x)-2}\nabla \times u.\nabla \times v dx- \lambda \displaystyle \int_{\Omega}g(x,u(x)).vdx
+\mu\displaystyle \int_{\Omega}f(x,u(x)).v dx=0, \ \ \forall v\in
E.$$ For more details, we refer the reader to \cite{binlin}.
\end{definition}
\section{Proofs of main results}
\subsection{Proof of Theorem \ref{thm1}}
\begin{lemma}\label{non}
Suppose that the assumptions $(F_{1})-(F_{2})$ and $(G_{1})-(G_{2})$
are fulfilled. Then there exist positive constants
$\lambda_{1},\mu_{1}$ such that, for every $0<\lambda<\lambda_{1}$
and $ \mu_{1}<\mu$, problem \eqref{eq} does not have any nontrivial
weak solutions.
\end{lemma}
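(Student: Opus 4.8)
The plan is to derive a contradiction from the weak formulation: assume $u \in E$ is a nontrivial weak solution and test the equation against $u$ itself. Taking $v = u$ in the definition of weak solution yields
\begin{equation}
\int_{\Omega}|\nabla \times u|^{p(x)}dx = \lambda \int_{\Omega}g(x,u).u\,dx - \mu \int_{\Omega}f(x,u).u\,dx.
\end{equation}
The left-hand side is exactly $\phi(u) = \int_{\Omega}|\nabla \times u|^{p(x)}dx$, which is strictly positive and controls $\|u\|$ from below through Proposition \ref{pr1} and the norm equivalence in Theorem \ref{sann}. The strategy is to bound the $\lambda$-term from above and the $\mu$-term from below so that, for $\lambda$ small and $\mu$ large, the right-hand side is forced to be negative (or too small), contradicting positivity of the left-hand side.

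First I would estimate the two nonlinear terms. For the $g$-term, I would use $(G_{1})$, namely $G(x,u) = g(x)|u|^{r(x)}$ with $g \in L^{\infty}(\Omega)$ and $r^{-} > p^{+}$, so that $g(x,u).u = \partial_u G(x,u).u = r(x)\,g(x)|u|^{r(x)}$; hence $\int_{\Omega}g(x,u).u\,dx \le r^{+}\|g\|_{\infty}\int_{\Omega}|u|^{r(x)}dx$, which by the compact embedding of Corollary \ref{an} ($E \hookrightarrow \textbf{L}^{r(x)}(\Omega)$, valid since $p^{+} < r(x) < p^{*}(x)$) is bounded by a constant times a power of $\|u\|$. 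For the $f$-term I would exploit $(F_{2})$: the coercivity-type lower bound $F(x,u) \ge \alpha|u|^{q(x)}$ together with the structural relation between $f = \partial_u F$ and $F$. The key is that $\int_{\Omega}f(x,u).u\,dx$ can be bounded below by a positive multiple of $\int_{\Omega}F(x,u)\,dx \ge \alpha\int_{\Omega}|u|^{q(x)}dx$, so the $\mu$-term contributes with a favorable sign and the correct (higher) growth $q$.

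The decisive comparison is then between the powers: $\phi(u)$ scales like $\|u\|^{p^{\pm}}$, the $\lambda$-term like $\|u\|^{r^{\pm}}$ with $r^{-} > p^{+}$, and the $\mu$-term like $\|u\|^{q^{\pm}}$ with $q^{-} > r^{+}$ (from $(G_1)$, $r(x) < q^{-}$). Collecting these into the tested identity gives, schematically,
\begin{equation}
\|u\|^{p^{+}} \le \lambda\, C_1 \|u\|^{r^{-}} - \mu\, C_2 \|u\|^{q^{-}},
\end{equation}
(with the appropriate choice of exponents depending on whether $\|u\| \lessgtr 1$). Since $q^{-} > r^{-} > p^{+}$, for $\|u\|$ in any admissible range the negative $\mu$-term eventually dominates the $\lambda$-term once $\mu$ is large and $\lambda$ is small; choosing $\lambda_1, \mu_1$ to make the right-hand side negative while the left is positive yields the contradiction. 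I expect the main obstacle to be the careful bookkeeping of exponents across the cases $\|u\| < 1$ and $\|u\| > 1$ (Proposition \ref{pr1} gives different exponents in each regime), so that the thresholds $\lambda_1$ and $\mu_1$ can be chosen uniformly; one likely needs to treat small and large solutions separately, or to argue that a nontrivial solution cannot have norm in the problematic intermediate band. Establishing the lower bound $\int_{\Omega}f(x,u).u\,dx \ge c\int_{\Omega}|u|^{q(x)}dx$ rigorously from $(F_2)$ alone — since $(F_2)$ controls $F$ and $|f|$ but not directly the sign of $f(x,u).u$ — is the second delicate point, and may require an additional Ambrosetti--Rabinowitz-type consequence or a direct integration argument.
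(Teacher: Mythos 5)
Your starting point (testing the weak formulation with $v=u$) and your identification of the three-exponent structure $p^{+}<r^{-}\le r^{+}<q^{-}$ as the engine of the proof both match the paper, but the step you call the ``decisive comparison'' contains a gap that your plan cannot close. To write $-\mu\int_{\Omega}f(x,u).u\,dx\le-\mu C_{2}\|u\|^{q^{-}}$ you would need $\int_{\Omega}|u|^{q(x)}dx\ge c\|u\|^{q^{-}}$, i.e.\ a lower bound for the $L^{q(x)}$-modular in terms of the $E$-norm. No such reverse embedding exists: Corollary \ref{an} gives only $|u|_{q(x)}\le C\|u\|$, and since the embedding is compact and $E$ is infinite-dimensional, there is no $c>0$ with $|u|_{q(x)}\ge c\|u\|$ on $E$. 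Hence the schematic inequality $\|u\|^{p^{+}}\le\lambda C_{1}\|u\|^{r^{-}}-\mu C_{2}\|u\|^{q^{-}}$ cannot be established and the ``make the right-hand side negative'' mechanism collapses. The paper avoids this by never converting the $q$-term into a norm: it compares the $r$-power and the $q$-power of $|u(x)|$ \emph{pointwise} via Young's inequality (see \eqref{2}), obtaining $\lambda\int_{\Omega} g|u|^{r(x)}dx\le\frac{r^{+}}{q^{-}}\int_{\Omega}|u|^{q(x)}dx+\lambda^{q^{-}/(q^{+}-r^{-})}A$, absorbs the modular term into $-\mu\alpha\int_{\Omega}|u|^{q(x)}dx$ once $\mu>\mu_{1}=r^{+}/(\alpha q^{-})$, and is left with the clean upper bound $\|u\|^{p^{+}}\le\lambda^{q^{-}/(q^{+}-r^{-})}A$ of \eqref{3}. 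It then needs a second ingredient absent from your outline: a \emph{lower} bound on the norm of any nontrivial solution, obtained from $\beta|u|_{r(x)}^{p^{+}}\le\|u\|^{p^{+}}\le\lambda\|g\|_{\infty}\max(|u|_{r(x)}^{r^{-}},|u|_{r(x)}^{r^{+}})$ together with $r^{-}>p^{+}$, which forces $|u|_{r(x)}$, and hence $\|u\|$, to blow up as $\lambda\to0^{+}$. The contradiction is between this lower bound and the upper bound \eqref{3}, not between the signs of the two sides of the tested equation.

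On your second flagged concern: you are right that $(F_{1})$--$(F_{2})$ do not yield $\int_{\Omega}f(x,u).u\,dx\ge c\int_{\Omega}F(x,u)dx$, and no Ambrosetti--Rabinowitz-type condition is assumed that would supply it. Note, however, that the paper's own proof quietly makes the analogous identification in \eqref{1}, writing the tested equation with $\int_{\Omega}G\,dx$ and $\int_{\Omega}F\,dx$ in place of $\int_{\Omega}g(x,u).u\,dx$ and $\int_{\Omega}f(x,u).u\,dx$; for $G$ this is harmless up to the bounded factor $r(x)$ because $(G_{1})$ makes $G$ an exact power, but for $F$ it is precisely the unjustified step you identified. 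So your diagnosis of that difficulty is sound; the fatal problem specific to your plan is the reverse-embedding issue above.
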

\begin{proof}
Assume that $u$ is a nontrivial weak solution of equation \eqref{eq}. \\
\textbf{Case} $1$: We suppose that $\|u\|<1$. Then, by
  Proposition \ref{pr1}, $u$ satisfies the following
inequality
\begin{equation}\label{1}
\|u\|^{p^{+}}\leq\displaystyle \int_{\Omega}|\nabla \times
u|^{p(x)}dx=\lambda \displaystyle \int_{\Omega}G(x,u)dx-\mu
\displaystyle \int_{\Omega}F(x,u)dx.
\end{equation}
 Now, since
$r(x)<q(x)$ in $\Omega$, applying the Young inequality we can deduce
that
\begin{equation}\label{2}
\lambda \displaystyle \int_{\Omega}g(x)|u|^{r(x)}dx\leq
\frac{q^{+}-r^{-}}{q^{-}}\displaystyle \int_{\Omega} |\lambda
g|^{\frac{q(x)}{q(x)-r(x)}}dx +\frac{r^{+}}{q^{-}}\displaystyle
\int_{\Omega} |u|^{q(x)}dx.
\end{equation}
Invoking inequalities \eqref{1} and \eqref{2}, and conditions
$(F_{1})-(F_{2})$ and $(G_{1})$, for $\lambda$ small enough, we
obtain
\begin{align}\label{3}
0<\|u\|^{p^{+}}
&\leq\frac{(q^{+}-r^{-})\lambda^{\frac{q^{-}}{q^{+}-r^{-}}}}{q^{-}}\displaystyle
\int_{\Omega} |g|^{\frac{q(x)}{q(x)-r(x)}}dx+
(\frac{r^{+}}{q^{-}}-\mu \alpha)\displaystyle
\int_{\Omega} |u|^{q(x)}dx \nonumber\\
&\leq\frac{(q^{+}-r^{-})\lambda^{\frac{q^{-}}{q^{+}-r^{-}}}}{q^{-}}\displaystyle
\int_{\Omega} |g|^{\frac{q(x)}{q(x)-r(x)}}dx=
\lambda^{\frac{q^{-}}{q^{+}-r^{-}}} A <\infty,
\end{align}
where $A=\frac{q^{+}-r^{-}}{q^{-}}\displaystyle \int_{\Omega}
|g|^{\frac{q(x)}{q(x)-r(x)}}dx$ and $\mu>\mu_{1}=\frac{r^{+}}{\alpha
q^{-}}$. \\ Thanks to Corollary \ref{an}, there exists a constant
$\beta>0$ such that
\begin{equation}\label{4}
\beta |u|_{r(x)}^{p^{+}}\leq \|u\|^{p^{+}}, \ \ \forall u\in E.
\end{equation}
Thus, in view of \eqref{1}, \eqref{4}, and Proposition \ref{pr1}, we
get
\begin{equation}\label{5}
\beta |u|_{r(x)}^{p^{+}}\leq \lambda \|g\|_{\infty}
\max(|u|_{r(x)}^{r^{-}},|u|_{r(x)}^{r^{+}}).
\end{equation}
 Having in mind $p^{+}<r^{-}<r^{+}$ and $\|u\|_{r(x)}>0$, by \eqref{3} and \eqref{5}, we
 have
\begin{equation}\label{6}
\beta \max((\frac{\beta}{\lambda  \|g\|_{\infty}
})^{\frac{p^{+}}{r^{-}-p^{+}}},(\frac{\beta}{\lambda  \|g\|_{\infty}
})^{\frac{p^{+}}{r^{+}-p^{+}}})\leq \|u\|^{p^{+}}\leq
\lambda^{\frac{q^{-}}{q^{+}-r^{-}}}A.
\end{equation}
\textbf{Case} $2$: We suppose that $\|u\|>1$. It suffices to replace $p^{+}$ by $p^{-}$ in the proof of Case $1$.\\
This concludes the proof.
\end{proof}
\begin{lemma}\label{coer}
Assume that assumptions $(F_{1})-(F_{2})$ and $(G_{1})-(G_{2})$ hold. Then\\
$a)$ $I$ is a coercive functional; and\\
$b)$ $I$ is a weakly lower semicontinuous functional.
\end{lemma}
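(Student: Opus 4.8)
The plan is to verify the two properties of the energy functional $I(u)=\phi(u)-\lambda J(u)-\mu\psi(u)$ separately, relying on the growth hypotheses and the compact embedding from Corollary \ref{an}.

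For part $a)$, coercivity, I would estimate $I(u)$ from below for $\|u\|$ large. First, by Proposition \ref{pr1}, when $\|u\|>1$ the modular term satisfies $\phi(u)=\int_\Omega|\nabla\times u|^{p(x)}\,dx\geq \|u\|^{p^-}$ (using that $\|\nabla\times\cdot\|$ is the norm by Theorem \ref{sann}). Next I control the lower-order terms. Since $-\psi(u)=\int_\Omega F(x,u)\,dx\geq \alpha\int_\Omega|u|^{q(x)}\,dx\geq 0$ by $(F_2)$, the term $+\mu\,(-\psi)$ only helps coercivity and may be discarded. The genuinely subtracted term is $\lambda J(u)=\lambda\int_\Omega g(x)|u|^{r(x)}\,dx$. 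Using $(G_1)$, $g\in L^\infty(\Omega)$ and $r^-\leq r(x)<q^-$, together with Proposition \ref{pr1} and the continuous embedding $E\hookrightarrow \textbf{L}^{r(x)}(\Omega)$ from Corollary \ref{an}, I would bound
\begin{equation}\label{eq:Jbound}
\lambda J(u)\leq \lambda\|g\|_\infty\,\rho_{r}(u)\leq \lambda\|g\|_\infty\,C\,\max(\|u\|^{r^-},\|u\|^{r^+}).
\end{equation}
Combining these, for $\|u\|>1$ large,
\begin{equation}\label{eq:coerc}
I(u)\geq \|u\|^{p^-}-\lambda\|g\|_\infty C\,\|u\|^{r^+}.
\end{equation}
Here the key structural fact is $p^+<r^-\leq r^+$ from $(G_1)$, so the subtracted power $r^+$ exceeds $p^-$; this is precisely the danger point. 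The saving grace is the nonnegative term $\mu\int_\Omega F(x,u)\,dx\geq \mu\alpha\int_\Omega|u|^{q(x)}\,dx$ with $q^->r^+$, so I should \emph{not} discard it but instead use it to dominate $\lambda J(u)$ via Young's inequality exactly as in \eqref{2}, absorbing the $|u|^{r(x)}$ term into $\mu\alpha|u|^{q(x)}$ plus a constant. After that absorption $I(u)\geq \|u\|^{p^-}-\text{const}$, which tends to $+\infty$.

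For part $b)$, weak lower semicontinuity, I would treat the three pieces. The principal term $\phi(u)=\int_\Omega|\nabla\times u|^{p(x)}\,dx$ is convex and continuous in $u$ (the integrand is convex in $\nabla\times u$), hence weakly lower semicontinuous on the reflexive space $E$ by the standard convexity-plus-continuity argument. For $J$ and $\psi$, I would use the compact embedding $E\hookrightarrow\textbf{L}^{q(x)}(\Omega)$ from Corollary \ref{an}: if $u_n\rightharpoonup u$ weakly in $E$, then $u_n\to u$ strongly in $\textbf{L}^{q(x)}(\Omega)$ and (up to a subsequence) pointwise a.e. The growth bounds in $(F_2)$ and $(G_1)$ give uniform integrable domination, so by continuity of $F$ and $G$ in $u$ and the dominated/Vitali convergence theorem, $J(u_n)\to J(u)$ and $\psi(u_n)\to\psi(u)$; these two functionals are in fact weakly \emph{continuous}. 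Therefore
\begin{equation}\label{eq:wlsc}
I(u)=\phi(u)-\lambda J(u)-\mu\psi(u)\leq \liminf_{n\to\infty}\phi(u_n)-\lambda\lim_{n}J(u_n)-\mu\lim_{n}\psi(u_n)=\liminf_{n\to\infty}I(u_n),
\end{equation}
which is the claim.

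The main obstacle I anticipate is the coercivity estimate, because the sign of the troublesome term is negative and its exponent $r^+$ exceeds $p^-$; a naive bound of the form \eqref{eq:coerc} does not give coercivity by itself. The decisive step is recognizing that the potential $F$ enters $I$ with a $+$ sign and grows like $|u|^{q(x)}$ with $q^->r^+$, so Young's inequality lets the $+\mu\alpha\int|u|^{q(x)}$ term dominate $\lambda J$, leaving the leading $\|u\|^{p^-}$ unobstructed. By contrast part $b)$ is routine: convexity handles the curl term and compactness makes the remainder weakly continuous.
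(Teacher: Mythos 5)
Your proposal is correct and follows essentially the same route as the paper: for coercivity, the paper likewise keeps the positive term $\mu\alpha\int_\Omega|u|^{q(x)}\,dx$ coming from $(F_2)$ and absorbs $\lambda\int_\Omega g(x)|u|^{r(x)}\,dx$ into it via Young's inequality (using $r(x)<q(x)$), leaving $\frac{1}{p^+}\|u\|^{p^-}$ minus a constant; for weak lower semicontinuity it also splits $I$ into its three pieces, handling the curl term by weak lower semicontinuity of the norm and the lower-order terms by the compact embedding (plus Fatou for $F$). Your use of convexity of the modular for $\phi$ and of weak continuity of $\psi$ is a slightly cleaner variant of the same argument, not a different approach.
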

\begin{proof}
$a)$ Let $\lambda,\mu>0$ and $u\in E$ with $\|u\|>1$. Combining
Proposition \ref{pr1}, Young inequality, and assumptions $(F_{2})$
and $(G_{1})$, one obtains the following inequalities
\begin{align*}
I(u)&\geq \frac{1}{p^{+}}\|u\|^{p^{-}}+\alpha \mu\displaystyle
\int_{\Omega} |u(x)|^{q(x)}dx- \lambda
\displaystyle \int_{\Omega} g(x)|u|^{r(x)}dx\\
&\geq \frac{1}{p^{+}}\|u\|^{p^{-}}+\frac{\mu\alpha}{2}\displaystyle
\int_{\Omega} |u(x)|^{q(x)}dx- c_{\lambda,\mu,\alpha}\displaystyle
\int_{\Omega}|g(x)|^{\frac{q(x)}{q(x)-r(x)}}dx,
\end{align*}
where $c_{\lambda,\mu,\alpha}$ is a positive constant. This demonstrates the coercivity of the functional $I$.\\
$b)$ Let $(u_{n})$ be a sequence such that $u_{n}\rightharpoonup u$
in $E$. Using the fact that $(u_{n})$ is bounded in $E$, Corollary
\ref{an} and Proposition \ref{pr2}, up to a subsequence, still
denoted by $(u_{n})$, we can infer that
\begin{equation}\label{7}
u_{n}\rightarrow u \ \ \mbox{a.e in} \ \ \Omega \ \ \mbox{and} \ \
\displaystyle \lim_{n\rightarrow
+\infty}\int_{\Omega}g(x)|u_{n}|^{r(x)}dx=\displaystyle
\int_{\Omega}g(x)|u|^{r(x)}dx.\\
\end{equation}
By the weak lower semicontinuity of the norm $\|.\|$, we have
\begin{equation}\label{8}
\|u\|\leq \displaystyle \liminf_{n\rightarrow +\infty} \|u_{n}\|.
\end{equation}
Furthermore, Fatou's lemma and $(F_{2})$ yield the following
inequality
\begin{equation}\label{9}
\displaystyle \int_{\Omega} \displaystyle \liminf_{n\rightarrow
+\infty} F(x,u_{n})dx \leq \liminf_{n\rightarrow +\infty}
\displaystyle \int_{\Omega} F(x,u_{n})dx.\\
\end{equation}
Combining \eqref{7}, \eqref{8} and \eqref{9}, we have thus proved
the claim.
\end{proof}
\noindent
\textbf{Completion of the proof of Theorem \ref{thm1}:}\\
$i)$ Evidently, by Lemma \ref{non}, assertion $(i)$ of Theorem \ref{thm1} holds.\\
$ii)$ Fix $\mu>0$. Using Lemma \ref{coer}, for every $\lambda>0$, we
can find $u\in E$ such that
$$I(u)=\displaystyle \inf_{v\in E}I(v).$$
Hence, for every $\lambda>0$ and $\mu>0$, $u$ is a weak solution of
problem \eqref{eq}. It remains to show that $u$ is nontrivial weak
solution of system \eqref{eq}. Invoking assumption $(G_{1})$, we can
find $w\in E$ such that
$$\displaystyle \int_{\Omega}G(x,w)dx=1.$$
It follows that
$$I(w)=\displaystyle \int_{\Omega} \frac{|\nabla \times w|^{p(x)}}{p(x)}dx+\mu\displaystyle \int_{\Omega}F(x,w)dx-\lambda=\lambda_{\mu}-\lambda,$$
where $\lambda_{\mu}=\displaystyle \int_{\Omega} \frac{|\nabla
\times w|^{p(x)}}{p(x)}dx+\mu\displaystyle \int_{\Omega}F(x,w)dx.$ Thus, $I(w)<0$ for any $\lambda>\lambda_{\mu}$.\\
This completes the proof.
\subsection{Proof of Theorem \ref{thm2}}
The main tool in the proof of Theorem \ref{thm2} is the variant of
the three critical points theorem established by Ricceri
\cite{ricceri}.
 Before stating his theorem, we need the following definition.
\begin{definition}
If $X$ is a real Banach space, we denote by $R_{X}$ the class of all
functional $\phi: X\rightarrow \mathbb{R}$ possessing the following
property: If $(u_{n})$ is a sequence in $X$, converging weakly to
$u\in X$, and $\displaystyle \liminf_{n\rightarrow
+\infty}\phi(u_{n})\leq \phi(u)$, then $(u_{n})$ has a subsequence
strongly converging  to $u$.
\end{definition}
\begin{thm}\label{ricceri}
Let $X$ be a separable and reflexive real Banach space,
$\phi:X\rightarrow \mathbb{R}$ a coercive, sequentially weakly lower
semicontinuous $C^{1}$ functional belonging to $R_{X}$, bounded on
each bounded subset of $X$ and with the derivative admitting a
continuous inverse on $X^{\ast}$; $J: X\rightarrow \mathbb{R}$, a
$C^{1}$ functional with compact derivative. Assume that $\phi$ has a
strict local minimum $x_{0}$ with $\phi(x_{0})=J(x_{0})=0$. Finally,
setting
$$\alpha=\max\{0,\displaystyle \limsup_{\|x\|\rightarrow +\infty}\frac{J(x)}{\phi(x)}, \displaystyle \lim_{\|x\|\rightarrow x_{0}}\frac{J(x)}{\phi(x)}\}$$
and
$$\beta=\displaystyle \sup_{x\in \phi^{-1}(0,+\infty)}\frac{J(x)}{\phi(x)},$$
assume that $\alpha<\beta.$ Then for each compact interval
$[a,b]\subset (\frac{1}{\beta},\frac{1}{\alpha})$ (with the
conventions $\frac{1}{0}=+\infty$, $\frac{1}{\infty}=0$), there
exists $r>0$ with the following property: for every $\lambda \in
[a,b]$ and every $C^{1}$ functional $\psi: X\rightarrow \mathbb{R}$
with compact derivative, there exists $\delta>0$ such that for each
$\mu \in [0,\delta]$, the equation
$$\phi^{'}(x)=\lambda J^{'}(x)+\mu \psi^{'}(x)$$
has at least three solutions whose norm is less than $r$.
\end{thm}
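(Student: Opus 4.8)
The plan is to establish the conclusion first for the unperturbed problem ($\mu=0$) and then upgrade to small perturbations $\mu\psi$ by a stability argument, taking care that the radius $r$ is fixed uniformly in $\lambda$ over the compact interval $[a,b]$ while the threshold $\delta$ may depend on $\lambda$ and on $\psi$. Write $I_\lambda=\phi-\lambda J$. The crux is to show that for every $\lambda$ in a neighbourhood of $[a,b]$ inside $(1/\beta,1/\alpha)$ the functional $I_\lambda$ has two distinct local minima; once that is in hand, a mountain-pass argument will supply the third critical point.

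First I would unpack the two inequalities hidden in $\lambda\in(1/\beta,1/\alpha)$. The upper bound $\lambda<1/\alpha$ gives $\alpha<1/\lambda$, hence both $\limsup_{\|x\|\to\infty}J(x)/\phi(x)<1/\lambda$ and $\limsup_{x\to x_0}J(x)/\phi(x)<1/\lambda$. Together with the coercivity of $\phi$ and its boundedness on bounded sets, the first estimate yields $I_\lambda(x)\geq(1-\lambda\theta)\phi(x)\to+\infty$ for a suitable $\theta<1/\lambda$, so $I_\lambda$ is coercive and bounded below; the second estimate yields, on a neighbourhood of $x_0$, that $I_\lambda(x)>0=I_\lambda(x_0)$, so $x_0$ is a strict local minimum of $I_\lambda$. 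The lower bound $\lambda>1/\beta$ gives $\beta>1/\lambda$, so by definition of $\beta=\sup_{\phi>0}J/\phi$ there is a point $\bar{x}$ with $\phi(\bar{x})>0$ and $J(\bar{x})/\phi(\bar{x})>1/\lambda$, whence $I_\lambda(\bar{x})<0$. Since $\phi$ is sequentially weakly lower semicontinuous and coercive on the reflexive space $X$, while $J$ has compact (hence weakly continuous) derivative, $I_\lambda$ is sequentially weakly lower semicontinuous and attains its infimum at some $x_1$ with $I_\lambda(x_1)\leq I_\lambda(\bar{x})<0=I_\lambda(x_0)$. Thus $x_1\neq x_0$ and $I_\lambda$ has two distinct local minima.

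Next I would produce the third critical point. The membership $\phi\in R_X$, the compactness of $J'$, and the fact that $\phi'$ admits a continuous inverse on $X^*$ are precisely what force $I_\lambda$ to satisfy the Palais--Smale condition: from a Palais--Smale sequence one extracts a candidate limit using compactness of $J'$ and continuity of $(\phi')^{-1}$, and the $R_X$ property then promotes weak convergence to strong convergence. With the Palais--Smale condition and the two local minima $x_0,x_1$, a mountain-pass theorem valid in the presence of two local minima (Pucci--Serrin type) yields a third critical point $x_2$ of min-max type, distinct from $x_0$ and $x_1$. This settles the case $\mu=0$.

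The main obstacle, and the final step, is the uniform stability under the perturbation $\mu\psi$. I would first fix $r$ from an a priori bound: because the coercivity constants for $I_\lambda$ are controlled uniformly as $\lambda$ ranges over the compact interval $[a,b]\subset(1/\beta,1/\alpha)$, all critical points of $I_\lambda$ below a fixed level lie in a common ball $B_r$, and the same ball, enlarged with a margin, contains the critical points of the slightly perturbed functional; this fixes $r$ independently of $\lambda$ and of $\psi$. Then, with $\lambda\in[a,b]$ and an admissible $\psi$ given, I would choose $\delta=\delta(\lambda,\psi)>0$ so small that for $\mu\in[0,\delta]$ the perturbation $\mu\psi$ preserves the geometry of $I_\lambda$ inside $B_r$: the two strict local minima persist and the min-max level stays strictly above the minimum values. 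Since $\psi'$ is compact, $\phi-\lambda J-\mu\psi$ still satisfies Palais--Smale, so the mountain-pass critical point persists and all three critical points lie in $B_r$. Quantifying how small $\delta$ must be, that is, showing that a strict local minimum and a mountain-pass separation survive a $C^1$ perturbation with compact derivative, is the delicate technical heart of the statement, and is exactly where Ricceri's refined perturbation and variational machinery (the analysis of $\lambda\mapsto\inf_X I_\lambda$ and the stability of local minima) carries the argument.
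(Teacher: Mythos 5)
A preliminary point of comparison: the paper does not prove this statement at all. Theorem \ref{ricceri} is Ricceri's three critical points theorem, imported verbatim from \cite{ricceri} and used as a black box in the proof of Theorem \ref{thm2}, so there is no in-paper proof to measure your attempt against; it must be judged as a reconstruction of Ricceri's own argument. On that footing, the unperturbed part of your sketch is sound and follows the expected route: for $\lambda\in(1/\beta,1/\alpha)$ the two inequalities you extract give coercivity of $I_\lambda=\phi-\lambda J$, a strict local minimum at $x_0$ with $I_\lambda>0$ nearby (using that $\phi>0$ near $x_0$, since $x_0$ is a strict local minimum of $\phi$ at level $0$), a global minimizer at negative level via sequential weak lower semicontinuity (the compact derivative makes $J$ sequentially weakly continuous on the reflexive space $X$), the Palais--Smale condition from compactness of $J'$ together with continuity of $(\phi')^{-1}$ (note that $R_X$ is not needed here, contrary to your attribution), and a third critical point of Pucci--Serrin mountain-pass type.

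The genuine gap is your final step, and it is not a technicality: everything that distinguishes this theorem from a standard three-critical-points statement is the uniform radius $r$ over the compact interval $[a,b]$ and the existence, for \emph{every} $C^1$ perturbation $\psi$ with compact derivative, of a threshold $\delta>0$ below which all three critical points survive inside the ball of radius $r$. You assert that ``the two strict local minima persist'' under the perturbation $\mu\psi$ and then explicitly defer to ``Ricceri's refined perturbation and variational machinery,'' which, in a blind proof of Ricceri's theorem, is circular. Worse, the persistence claim is false as stated: in infinite dimensions a strict local minimum need not survive arbitrarily small uniform perturbations, because spheres and annuli are not weakly closed, so $\inf_{\|x-x_0\|=\rho}I_\lambda$ need not be attained, nor strictly exceed $I_\lambda(x_0)$, on the basis of strictness and weak lower semicontinuity alone. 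This is exactly where the hypothesis $\phi\in R_X$ does its real work in Ricceri's argument: the $R_X$ (well-posedness) property upgrades strict local minima to strong ones, i.e. yields $\inf_{\|x-x_0\|=\rho}I_\lambda>I_\lambda(x_0)$, a quantitative separation that survives a perturbation of size $\mu\sup_{B_r}|\psi|$ and also underlies the uniform choice of $r$ via the analysis of $\lambda\mapsto\inf_X I_\lambda$ over $[a,b]$. Without supplying that step, your proposal reproduces the easy geometry and names the hard part rather than proving it.
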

\noindent
\textbf{Completion of the proof of Theorem \ref{thm2}:} \\
Standard arguments can be used to show that $J^{'}$ and $\psi^{'}$
are compact, while $\phi$ is a coercive, sequentially weakly lower
semicontinuous and $\phi^{'}$ is  a homeomorphism between $E$ and
its dual. Clearly, $\phi\in R_{E}$, since $E$ is uniformly convex.\\
Fix $\epsilon,s>0$ such that $p^{+}+s<p^{\ast}(x)$. By virtue of
assumption $(G_{3})$, there exists a constant $\eta$ with
$0<\eta<L$, such that
$$G(x,u)\leq \epsilon |u|^{p^{+}}, \ \ \forall x\in \Omega, \forall
|u| \in [-\eta,\eta].$$ Again, by assumption $(G_{3})$, it follows
that
\begin{align*}
 J(u)&\leq  \displaystyle \int_{\{x\in \Omega,
|u(x)|\leq \eta\}}G(x,u)dx+\displaystyle \int_{\{x\in \Omega, \eta
\leq |u(x)|\leq L\}}G(x,u)dx+\displaystyle \int_{\{x\in \Omega,
|u(x)|\geq L\}}G(x,u)dx\\
&\leq c(\epsilon \displaystyle \int_{\{x\in \Omega, |u(x)|\leq
\eta\}}|u|^{p^{+}}dx+ \displaystyle \int_{\{x\in \Omega, \eta \leq
|u(x)|\leq L\}}|u|^{p^{+}+s}dx+ \gamma \displaystyle \int_{\{x\in
\Omega,
|u(x)|\geq L \}}|u|^{k(x)}dx)\\
&\leq c(\epsilon\displaystyle \int_{\{x\in \Omega, |u(x)|\leq
\eta\}}|u|^{p^{+}}dx+ \displaystyle \int_{\{x\in \Omega, \eta \leq
|u(x)|\leq L\}}|u|^{p^{+}+s}dx+ \displaystyle \int_{\{x\in \Omega,
|u(x)|\geq L\}}|u|^{p^{+}+s}dx)\\
&\leq c(\epsilon \|u\|^{p^{+}}+\|u\|^{p^{+}+s}),
 \end{align*}
 for some positive constant $c$.
This, along with Proposition \ref{pr1}, yields\\
for $\|u\|<1$
$$\frac{J(u)}{\phi(u)}\leq c \frac{\epsilon\|u\|^{p^{+}}+\|u\|^{p^{+}+s}}{\|u\|^{p^{+}}},$$
hence,
\begin{equation}\label{c1}
\displaystyle \limsup_{u\rightarrow 0} \frac{J(u)}{\phi(u)}\leq
c\epsilon.
\end{equation}
Taking $u\in E$ with $\|u\|>1$, from $(G_{2})-(G_{3})$, we get
\begin{align*}
J(u)&\leq\displaystyle \int_{\{x\in \Omega, |u(x)|\leq
L\}}G(x,u)dx+\displaystyle \int_{\{x\in \Omega, |u(x)|\geq L
\}}G(x,u)dx\\
&\leq c+ \displaystyle \int_{\{x\in \Omega, |u(x)|\geq L \}}
|u|^{k(x)}dx\\
&\leq c(1+\|u\|^{k^{-}}+\|u\|^{k^{+}}).
\end{align*}
This implies that
\begin{equation}\label{c2}
\displaystyle \limsup_{u\rightarrow +\infty}
\frac{J(u)}{\phi(u)}\leq \displaystyle \limsup_{u\rightarrow
+\infty} \frac{c(1+\|u\|^{k^{-}}+\|u\|^{k^{+}})}{\|u\|^{p^{-}}}=0.
\end{equation}
Therefore, by \eqref{c1} and \eqref{c2}, $\alpha=0$. In view of
assumption $(G_{3})$, we have $\beta>0$. Thus, all hypotheses of
Theorem \ref{ricceri} are satisfied. The proof is therefore
complete. \\
\medskip
{\bf Acknowledgement}. 
The second
author was supported by the Slovenian Research Agency grants
P1-0292, J1-7025, and J1-8131, and N1-0064.

\end{document}